\newtheorem{theorem}{Theorem}
\newtheorem{proposition}[theorem]{Proposition}
\theoremstyle{definition}
\newtheorem{definition}[theorem]{Definition}
\newcommand{\E}{\mathbb{E}}
\newcommand{\Pbb}{\mathbb{P}}
\newcommand{\binomtwo}[1]{\binom{#1}{2}}
\newcommand{\binomthree}[1]{\binom{#1}{3}}
\title{An update on multicolor Ramsey lower bounds}
\author{
Marcelo Campos%
\thanks{Department of Mathematics, IMPA, Rio de Janeiro, Brazil. Email: \texttt{marcelo.campos@impa.br}.}
\and
Cosmin Pohoata%
\thanks{Department of Mathematics, Emory University, Atlanta, GA. Email: \texttt{cosmin.pohoata@emory.edu}.}
}
\date{}
\begin{document}
\maketitle
\begin{abstract}
Building upon previous works by Conlon-Ferber and Wigderson, Sawin showed a few years ago that upper bounds on the minimum density of independent sets in a
$K_t$-free $G$ can be used to provide lower bounds for multicolor Ramsey numbers. 

In this note, we observe how a further improved upper bound on this parameter directly follows from a recent spherical random geometric graph construction of Ma-Shen-Xie. As a consequence, we derive a small exponential improvement over the best known lower bounds for multicolor Ramsey numbers.
\end{abstract}

\section{Introduction}

For integers $\ell,t\ge 2$, let $r(t;\ell)$ denote the least $N$ such that every $\ell$-coloring of the edges of
$K_N$ contains a monochromatic copy of $K_t$. The most well-studied case is that of $\ell=2$, where the bounds
$$2^{t/2} \leq r(t;2) \leq 2^{2t}$$
were originally proved by Erd\H{o}s \cite{erdos1947remarks} and Erd\H{o}s-Szekeres \cite{erdos1935combinatorial} in 1947 and 1935, respectively. The lower bound is a particularly celebrated probabilistic construction, which in fact initiated the development of the probabilistic method in combinatorics and related areas. Despite decades of effort, this lower bound has only been improved so far by a multiplicative constant (see \cite{spencer1975ramsey}). For the upper bound, the first exponential improvement was established only a couple of years ago by the first author, together with Griffiths, Morris and Sahasrabudhe \cite{campos2026exponential}. 

For $\ell\ge 3$, a straightforward modification of the Erd\H{o}s--Szekeres ``neighbourhood chasing'' argument yields the general upper bound $r(t;\ell)\le \ell^{\ell t}$. Using a new geometric approach and ideas related to the work from \cite{campos2026exponential}, a recent paper \cite{balister2026upper} provided an exponential improvement over this bound for all $\ell \geq 2$. In the other direction, the usual random colouring argument gives $r(t;\ell)\ge \ell^{t/2}$.
This was originally improved by Abott~\cite{Abbott}, who used an iterated product coloring to show that $r(t;\ell)\ge 2^{\ell t/4}$. This was more recently improved in a remarkable short paper by Conlon and Ferber~\cite{CF}, who introduced a randomized algebraic construction to improve on this bound for $\ell=3$, and then leveraged Abott's product coloring trick to get an improvement for all $\ell \geq 3$.  Their lower bound was subsequently improved for $\ell \geq 4$ by Wigderson \cite{Wig}, who replaced the Abott's trick with a random homomorphism argument. Finally, Sawin \cite{Sawin} improved on the bounds of Conlon-Ferber and Wigderson for all $\ell \geq 3$, by replacing the algebraic part of the Conlon-Ferber construction with an appropriate copy of the Erd\H{o}s-Renyi random graph. 

In this note, we show that one can further improve the lower bound of Sawin for all $\ell \geq 3$, by replacing the copy of the Erd\H{o}s-Renyi random graph with the random geometric graph, used recently by Ma, Shen and Xie in \cite{MSX}. 

Before we say more about this graph and where it comes in, let us first describe this story in a bit more detail. Following Sawin \cite{Sawin}, given integers $s,t\ge 1$, let us start by defining $c_{s,t}$ to be the infimum over all graphs $G$ with no $K_t$ of the probability that
$s$ i.i.d.\ uniform random vertices $v_1,\dots,v_s\in V(G)$ form an independent set (allowing repetitions). It turns out that upper bounds for this parameter translate very naturally into lower bounds for multicolor Ramsey numbers. 

\begin{theorem}[Sawin]
\label{prop:sawin-reduction}
For all $\ell,t\ge 2$,
\[
r(t;\ell)\ \ge\ c_{t,t}^{-(\ell-2)/t}\,2^{(t-1)/2}.
\]
\end{theorem}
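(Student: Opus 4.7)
The plan is a mixed deterministic-random construction: combine a product construction on a random subset of $V(G)^{\ell-2}$ (using a near-optimal $K_t$-free graph $G$ for $\ell-2$ of the colors) with a random Erd\H{o}s-style $2$-coloring of the leftover ``else'' edges (for the remaining $2$ colors). Fix $\epsilon>0$ and pick a $K_t$-free graph $G$ on $n$ vertices with $c:=c_{t,t}(G)\le c_{t,t}+\epsilon$.

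The coloring on $V(G)^{\ell-2}$ is: for distinct $u=(v_1,\ldots,v_{\ell-2})$ and $u'=(v_1',\ldots,v_{\ell-2}')$, if some $i\in\{1,\ldots,\ell-2\}$ satisfies $v_iv_i'\in E(G)$, assign the smallest such $i$ as the color; otherwise (the ``else'' case, where every coordinate pair is either equal or a non-edge of $G$), toss a fair coin and color the edge $\ell-1$ or $\ell$ independently at random. A monochromatic $K_t$ in a deterministic color $i\in\{1,\ldots,\ell-2\}$ would force the $i$-th coordinates of the $t$ vertices to form a $K_t$ in $G$, contradicting $K_t$-freeness. So every monochromatic $K_t$ must live in color $\ell-1$ or $\ell$ and corresponds to a $t$-subset $\mathcal{T}$ on which (i)~each coordinate projects to an independent multiset of $G$, and (ii)~the random coloring happens to give all $\binom{t}{2}$ edges of $\mathcal{T}$ the same color (probability $2^{1-\binom{t}{2}}$).

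Taking the vertex set $\mathcal{S}\subseteq V(G)^{\ell-2}$ to be uniformly random of size $N$, the expected number of (i)-type $t$-subsets inside $\mathcal{S}$ is at most $N^t c^{\ell-2}/t!$ (by the definition of $c$), so the expected total count of monochromatic $K_t$'s over both randomizations is at most $N^t c^{\ell-2}\cdot 2^{1-\binom{t}{2}}/t!$. This is $<1$ whenever $N<(t!)^{1/t}\cdot 2^{(\binom{t}{2}-1)/t}\cdot c^{-(\ell-2)/t}$, and because $(t!)^{1/t}\cdot 2^{(\binom{t}{2}-1)/t}\ge 2^{(t-1)/2}$ for $t\ge 3$ (since $t!\ge 2$; the $t=2$ case is trivial), taking $N$ just below this threshold produces, with positive probability, an $\ell$-coloring of $K_N$ with no monochromatic $K_t$ where $N\ge c^{-(\ell-2)/t}\cdot 2^{(t-1)/2}$. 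Letting $\epsilon\to 0$ then yields the bound. The main conceptual move is splitting $\ell=(\ell-2)+2$ into ``product'' colors (exhausting the $K_t$-free structure of $G$) plus $2$ ``random'' colors (contributing the $2^{(t-1)/2}$ factor via the classical Erd\H{o}s argument applied only to the ``else'' edges); there is no significant obstacle, just the numerical check at the end.
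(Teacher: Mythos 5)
Your proof is correct and follows essentially the same route as the paper: the first $\ell-2$ colors are exhausted by a product/homomorphism construction from a near-optimal $K_t$-free graph $G$, the remaining two colors are assigned uniformly at random on the leftover edges, and a first-moment union bound over $t$-sets (including the $t!$ saving) gives the stated bound. The only cosmetic difference is that you sample a uniformly random $N$-subset of $V(G)^{\ell-2}$ rather than taking $\ell-2$ i.i.d.\ random maps $V(K_N)\to V(G)$ as the paper does; the latter sidesteps the (easily handled, e.g.\ by a blow-up of $G$) requirement that $N\le |V(G)|^{\ell-2}$, but otherwise the two formulations are interchangeable.
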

Indeed, fix a $K_t$-free graph $G$ for which a random $t$-tuple of vertices forms an independent set with probability
$\approx c_{t,t}$. The idea of Wigderson \cite{Wig}, which goes back in some sense to the work of Alon and R\"odl \cite{AlonRodl}, is that one can build a coloring of $K_N$ by ``trying'' $\ell-2$ random homomorphisms
$f_1,\dots,f_{\ell-2}:V(K_N)\to V(G)$ in order: we give an edge $xy$ the first color $i$ for which
$f_i(x)f_i(y)$ lands on an edge of $G$, and if none of the $\ell-2$ tests succeeds we assign $xy$ one of the
two remaining colors uniformly at random.
By construction, the first $\ell-2$ colors cannot contain a $K_t$ (since $G$ itself is $K_t$-free).
So the only possible monochromatic $K_t$ would have to use one of the last two colors. For a fixed
$t$-set $S$, this requires two independent-looking conditions: for every $i\le \ell-2$, the image $f_i(S)$
must be an independent set in $G$ (costing about $c_{t,t}^{\ell-2}$), and then the final random two-coloring
must make all $\binom{t}{2}$ edges of $S$ agree (costing about $2^{1-\binom{t}{2}}$).
A union bound over the $\binom{N}{t}$ choices of $S$ shows that if
\[
\binom{N}{t}\,c_{t,t}^{\ell-2}\,2^{1-\binom{t}{2}}<1,
\]
then with positive probability the resulting $\ell$-coloring contains no monochromatic $K_t$.
It is not difficult to see that $N\asymp c_{t,t}^{-(\ell-2)/t}2^{(t-1)/2}$ satisfies this inequality, therefore it follows that $r(t;\ell)\ \ge\ c_{t,t}^{-(\ell-2)/t}\,2^{(t-1)/2}$. 

\medskip
\noindent {\bf{Upper bounds for $c_{t,t}$.}} In turn, obtaining upper bounds on $c_{t,t}$ amounts to exhibiting graphs $G$ with no $K_t$ for which a random $t$-tuple of vertices is
unlikely to form an independent set.
Indeed, by definition, $c_{t,t}$ is the infimum of this probability over all $K_t$-free graphs,
so any such example immediately yields an upper bound.
The problem therefore becomes one of constructing graphs that simultaneously avoid large cliques
and yet contain very few large independent sets, at least in the probabilistic sense captured by
$c_{t,t}$. In \cite{Sawin}, Sawin approached this problem by taking $G$ to be an Erd\H{o}s--R\'enyi random graph on $M$ vertices,
with edge density slightly below $1/2$, and then conditioning on the event that $G$ contains no $K_t$. For every $t\ge 2$ and every $p\in(0,1)$, this led to the following estimate:
\begin{equation}
\label{eq:sawin-ctt-general}
c_{t,t}\ \le\ 
\exp\left(\,\frac{\,(4\log(1-p)-\log p)\,\log p}{8\log(1-p)}t^2\;+\;o(t^2)\right).
\end{equation}
In particular, optimizing over $p$ the optimum is attained numerically at $p\approx 0.454997$. Combinining this with Theorem \ref{prop:sawin-reduction}, Sawin then obtained the multicolor Ramsey lower bound
\begin{equation}
\label{eq:sawin-ramsey}
r(t;\ell)\ \ge\ 2^{\,0.383796  (\ell-2)t\;+\;\frac{t}{2}\;+\;o(t)},
\end{equation}
for every fixed $\ell \geq 3$. 

Our main observation is that one can get an exponential improvement over \eqref{eq:sawin-ramsey} by instead taking $G$ to be the complement of a
\emph{dense random geometric graph} in high dimension. This graph was recently used by Ma, Shen and Xie~\cite{MSX} in a new exciting lower bound for (two-color) off-diagonal Ramsey numbers. Even more recently, a variant of the model which is much simpler to analyze was introduced by Hunter, Milojevi\'c and Sudakov~\cite{HMS} in order to give a shorter proof of the main result of~\cite{MSX}. The key feature in these models is that edges are no longer independent: geometric constraints induce
negative correlations among non-edges and positive correlations among edges.
As an important consequence, large independent sets are suppressed compared to the Erd\H{o}s--R\'enyi
model, while large cliques become more likely. By choosing the dimension appropriately and slightly adjusting the edge density, the suppression of
independent sets outweighs the increased likelihood of cliques, leading to a modest but genuine
improvement over the Erd\H{o}s--R\'enyi construction in the bounds for $c_{t,t}$, and hence for multicolor Ramsey numbers $r(t;\ell)$ for all $l \geq 3$. 

Our construction balances two geometric corrections in the bounds
of Ma, Shen and Xie~\cite{MSX}: an \emph{extra suppression} of cliques, and an \emph{extra enhancement} of independent sets on a fixed vertex
subset (which works against us). From this we may obtain the following theorem.

\begin{theorem} \label{cor:main-numerics}
For all $p\in(0.42,0.5)$ there exists $\gamma>0$ such that,
\[
c_{t,t}\leq \exp\left(\,\left(\frac{\,(4\log(1-p)-\log p)\,\log p}{8\log(1-p)}-\gamma\right)t^2\;+\;o(t^2)\right).
\]
\end{theorem}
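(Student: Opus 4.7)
The strategy is to replicate Sawin's argument leading to \eqref{eq:sawin-ctt-general} but feed in the spherical random geometric graph of Ma--Shen--Xie (in the streamlined Hunter--Milojevi\'c--Sudakov formulation) in place of the Erd\H{o}s--R\'enyi random graph. Concretely, I would sample $M$ i.i.d.\ uniform points $X_1,\dots,X_M$ on $S^{d-1}$, let $H$ be the graph with edge $ij$ whenever $\langle X_i,X_j\rangle\ge \tau$, with $\tau$ tuned so that $H$ has edge density $1-p$, and take $G:=\overline{H}$ (so that $G$ has edge density $p$). The dimension $d$, a polynomial in $t$ in the range analyzed in \cite{MSX,HMS}, will be optimized at the end.

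After expanding the probability that a random $t$-tuple is independent according to the number $k$ of distinct coordinates, Sawin's bound reduces to controlling the first moments $\E[\#\mathrm{Indep}_k(G)]$ for $k\le t$ together with a lower bound on $\Pbb(G\text{ is }K_t\text{-free})$. In our setting both inputs come from the RGG analysis. First, I would use the estimate
\[
\E[\#K_t(G)] \;=\; \E[\#\mathrm{Indep}_t(H)] \;\le\; \binom{M}{t}\,p^{\binom{t}{2}}\,e^{-\mu(p) t^2 + o(t^2)}
\]
for some $\mu(p)>0$: this is the \emph{extra suppression of cliques} alluded to in the introduction, and reflects that $t$ unit vectors in $\mathbb{R}^d$ cannot all be mutually sufficiently spread once $t$ is comparable to $d$. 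This suppression allows us to enlarge $M$ by a factor $e^{\mu(p)t+o(t)}$ relative to Sawin while still keeping $\Pbb(G\text{ is }K_t\text{-free})$ bounded below by a constant (say by Markov's inequality, once $\E[\#K_t(G)]$ is arranged to be $\le 1/2$).

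Second, I need to bound the numerator $\E[\#\mathrm{Indep}_k(G)]=\E[\#K_k(H)]$ from above by
\[
\binom{M}{k}\,(1-p)^{\binom{k}{2}}\,e^{+\nu_k(p) t^2 + o(t^2)},
\]
where $\nu_k(p)$ captures the FKG-type positive-correlation enhancement when $k$ unit vectors cluster near a common direction. This is the \emph{extra enhancement of independent sets on a fixed vertex subset} referred to in the introduction, and it works against us. Inserting both inputs into Sawin's optimization, the enlargement of $M$ by $e^{\mu(p)t}$ deflates the $k$-th Stirling term by a factor $M^{k-t}\asymp e^{-\mu(p)(t-k)t + o(t^2)}$, while the enhancement inflates it by $e^{+\nu_k(p) t^2}$. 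At Sawin's extremal index $k^{*}=\alpha^{*}t$ (where $\alpha^{*}=\log p/(2\log(1-p))$ from the unperturbed optimization), the net change in the exponent is $-\mu(p)(1-\alpha^{*})+\nu_{k^{*}}(p)$, and setting $\gamma(p):=\mu(p)(1-\alpha^{*})-\nu_{k^{*}}(p)$ would yield the conclusion provided $\gamma(p)>0$.

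The main obstacle is precisely this last inequality $\gamma(p)>0$ throughout $p\in(0.42,0.5)$. It requires sharp asymptotics for the orthant probability $\Pbb(\forall i<j:\langle X_i,X_j\rangle<\tau)$ in the regime $d\asymp t$, and the analogous enhancement estimate for cliques of intermediate size $k^{*}\asymp t$; a further optimization of $d=d(p,t)$ to maximize $\gamma$ is also needed. The endpoints of the interval are presumably dictated by where this inequality becomes tight: at $p=1/2$ the threshold $\tau=0$ restores a sign symmetry that cancels the leading geometric correction (so $\mu$ vanishes), while near $p=0.42$ the enhancement $\nu_{k^{*}}(p)$ at the intermediate index begins to catch up with $\mu(p)(1-\alpha^{*})$ and the improvement disappears.
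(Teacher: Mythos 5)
Your high-level strategy coincides with the paper's: run Sawin's conditioning argument with the Ma--Shen--Xie spherical graph in place of $G(n,p)$, use the extra suppression of cliques to enlarge $M$ by a factor $e^{\mu(p)t}$, pay the extra enhancement of independent sets, and read off the net gain $\gamma=\mu(p)(1-\alpha^*)-\nu_{k^*}(p)$ at the unperturbed optimizer $\alpha^*=\log p/(2\log(1-p))$; this is exactly the structure of the paper's $\alpha_{\mathrm{formal}}(p,D)$ and its first-order expansion. But as written the proposal has two genuine gaps. The first is that you never establish the one inequality that \emph{is} the theorem, namely $\gamma(p)>0$ on $(0.42,0.5)$ --- you explicitly defer it as ``the main obstacle.'' This is not a routine verification: the analogous quantity is \emph{negative} for $p$ slightly below $0.42$, so the sign genuinely has to be computed. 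The paper closes this by working perturbatively: with $d=D^2t^2$ and $D$ a large constant, Proposition~\ref{prop:prob_bounds} gives corrections $\kappa(p)t^2/D$ and $\lambda(p)t^2/D$ with $\kappa(p)=a^3/(6p^3)$ and $\lambda(p)=a^3/(6(1-p)^3)$, and expanding the optimizing $\theta$ in powers of $1/D$ yields
\[
D\gamma \;=\; \Big(1-\tfrac{\log p}{2\log(1-p)}\Big)\frac{1}{p^3}\cdot c_1-\frac{(\log p)^3}{8(\log(1-p))^3(1-p)^3}\cdot c_2+O_p(1/D)
\]
with explicit positive constants $c_1=c_2$ proportional to $a^3$, whose positivity on $(0.42,0.5)$ is then checked directly.

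The second gap is the parameter regime: you propose $d\asymp t$, but in that regime the geometric correction $\binomthree{t}/\sqrt d\asymp t^{5/2}$ swamps the main term $\binomtwo{t}\log(1/p)\asymp t^2$, the positive-correlation enhancement for $k\asymp t$ vertices becomes super-exponentially large compared to $(1-p)^{\binomtwo{k}}$, and the error terms in the available estimates (MSX Lemma 9.1 / HMS Corollary 3.1) are no longer $o(t^2)$; the ``sharp orthant asymptotics for $d\asymp t$'' you invoke are not available and not needed. The whole point of the paper's choice $d=D^2t^2$ is that both geometric corrections are $\Theta(t^2/D)$, i.e.\ a small constant times the Erd\H{o}s--R\'enyi exponent, so the problem becomes a first-order perturbation of Sawin's optimization. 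A minor further point: your explanation of the right endpoint is off --- the bracketed expression above is still positive at $p=1/2$ (it equals $3$ there, up to the common constant), and the restriction $p<1/2$ comes from the model, not from $\mu$ vanishing; what matters is only that Sawin's optimal $p\approx0.455$ lies inside the interval.
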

One surprising feature of the construction is that it doesn't improve over $G(n,p)$ for the full range of $p\in (0,1/2)$, but only when $p\in (0.42,0.5)$. This turns out to be enough since Sawin's optimal value of $p$ lies in this interval.


\medskip
\noindent {\bf{Acknowledgments.}} MC was supported by Serrapilheira (grant R-2412-51283). CP was supported by NSF grant DMS-2246659. The authors would like to thank Rob Morris for useful comments on an earlier version of the manuscript.

\section{Geometric random graph model}
\label{sec:models}

Ma--Shen--Xie~\cite{MSX} used a dense random geometric graph model on the high-dimensional sphere.
One convenient way to parameterize this model is to fix an edge density $p$ and then choose an
appropriate distance/angle threshold.

\begin{definition}[Spherical random geometric graph $H(M,d,p)$]
Fix $M,d\ge 1$ and $p\in(0,1/2)$. Let $u_1,\dots,u_M$ be i.i.d.\ uniform points on the unit sphere
$S^{d-1}\subset\mathbb{R}^d$. Let $\tau=\tau(d,p)$ be the unique real number such that for independent
$u,u'\sim \mathrm{Unif}(S^{d-1})$ we have $\Pbb(\langle u,u'\rangle<\tau)=p$.
Define $H(M,d,p)$ on vertex set $[n]$ by putting an edge $ij$ iff
$\langle u_i,u_j\rangle< \tau$.
\end{definition}

Ma--Shen--Xie~\cite{MSX} use this (equivalently, a distance-threshold formulation) as the underlying random graph
in a two-color Ramsey construction. 
The point of the
geometric model is that, while the edge density is still about $1-p$, higher-order edge correlations differ from
the Erd\H{o}s--R\'enyi model in a way that can be exploited to obtain improved Ramsey lower bounds in the off-diagonal case. Very recently, Hunter--Milojevi\'c--Sudakov~\cite{HMS} introduced a Gaussian analogue of the above construction, replacing uniform spherical points with gaussian vectors. Their model exibits the same behavior, but the analysis becomes significantly simpler. 

We next record the main estimate that we will need. Like above, let $\varphi$ denote the standard normal density and set $a \coloneqq \varphi(c_p)=\frac{e^{-c_p^2/2}}{\sqrt{2\pi}}$. The following convenient proposition follows directly from Lemma 9.1 in~\cite{MSX} and is also proved in Corollary 3.1~\cite{HMS}.

\begin{proposition}
\label{prop:prob_bounds}
Fix $p\in(0,1/2)$. There exist constants $D_0=D_0(p)$ and $K=K(p)$
such that the following holds for all integers $t\ge 2$, all $D\ge D_0$, and $d=D^2t^2$.
For every $1\le r\le t$,
\begin{align}
\label{eq:Pred-unif}
\Pbb\big(H(r,d,p)\text{ is an independent set}\big)
&\le p^{\binomtwo{r}}
\exp\!\left(
-\frac{a^3}{p^3\sqrt d}\binomthree{r}
\;+\;
K\,\frac{r^3}{D\sqrt d}
\right),\\[1ex]
\label{eq:Pblue-unif}
\Pbb\big(H(r,d,p)\text{ is a clique}\big)
&\le (1-p)^{\binomtwo{r}}
\exp\!\left(
+\frac{a^3}{(1-p)^3\sqrt d}\binomthree{r}
\;+\;
K\,\frac{r^3}{D\sqrt d}
\right).
\end{align}
\end{proposition}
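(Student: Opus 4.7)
Since the proposition is stated to follow from Lemma~9.1 of \cite{MSX} and from Corollary~3.1 of \cite{HMS}, I sketch the shorter Gaussian-model proof of Hunter--Milojevi\'c--Sudakov; the spherical case follows by a standard truncation using sharp concentration of $\|g_i\|^2$ around $d$ for $g_i\sim N(0,I_d)$, with the discrepancy absorbed into the error term $Kr^3/(D\sqrt d)$. In the Gaussian model I work with the scaled inner products $X_{ij}=\langle g_i,g_j\rangle/\sqrt d$: the ``independent set'' event corresponds to all $X_{ij}$ lying in a single tail of the standard normal of mass $p$ and density-at-threshold $a=\varphi(c_p)$, and the ``clique'' event corresponds to the opposite tail of mass $1-p$ with the same density $a$.

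The core computation is sequential conditioning. Fix $g_1,\dots,g_{r-1}$; then $(X_{ir})_{i<r}$ is a centered Gaussian vector with covariance $\Sigma^{(r-1)}=I_{r-1}+\Delta$, where the off-diagonal entries satisfy $\Delta_{ij}=X_{ij}/\sqrt d$ and the diagonal entries $\Delta_{ii}=(\|g_i\|^2-d)/d$ have mean zero. A first-order Taylor expansion of the multivariate Gaussian orthant probability around $\Delta=0$ yields
\[
\Pbb\bigl(\text{all }X_{ir}\text{ in tail}\mid g_1,\dots,g_{r-1}\bigr)
=p^{\,r-1}\exp\!\Bigl(\tfrac{a^2}{p^2}\!\!\sum_{i<j<r}\!\Delta_{ij}+O(\|\Delta\|_{\mathrm{op}}^2)\Bigr),
\]
with coefficient $\partial_\rho\log\Pbb(Z_1,Z_2\text{ in tail})\big|_{\rho=0}=a^2/p^2$ computed from the bivariate Gaussian. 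Iterating and taking expectations, the event forces $X_{ij}$ into the tail, so $\E[X_{ij}\mid\text{event}]=-a/p$ (the conditional Gaussian mean on the tail of mass $p<1/2$); each step therefore contributes $-\tfrac{a^3}{p^3\sqrt d}\binom{r-1}{2}$ in the log, and the hockey-stick identity $\sum_{s=1}^{R-1}\binom{s}{2}=\binom{R}{3}$ assembles these into the claimed $-\tfrac{a^3}{p^3\sqrt d}\binom{R}{3}$. The clique bound is obtained by repeating the computation with $p\mapsto 1-p$: the Taylor coefficient becomes $a^2/(1-p)^2$, and the conditional mean in the opposite tail is $+a/(1-p)$, producing the $+\tfrac{a^3}{(1-p)^3\sqrt d}\binom{R}{3}$ in the exponent.

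The main obstacle is controlling the remainder $O(\|\Delta\|_{\mathrm{op}}^2)$ uniformly over $r$, since $\Delta$ is random and could be atypically large. I would truncate on the typical event $\mathcal E=\{|X_{ij}|\le C\sqrt{\log r}\;\forall i<j\}\cap\{|\|g_i\|^2-d|\le C\sqrt{d\log r}\;\forall i\}$; by standard Gaussian concentration the complement contributes at most $\exp(-c'\log^2 r)$ to the orthant probability, which is absorbed into $Kr^3/(D\sqrt d)$. On $\mathcal E$, a Weyl-type bound gives $\|\Delta\|_{\mathrm{op}}=O(\sqrt{r\log r/d})$, so the total quadratic remainder summed over the $r$ conditioning steps is $O(r^3\log r/d)$, which is dominated by $Kr^3/(D\sqrt d)=Kr^3/(D^2 t)$ whenever $d=D^2 t^2$ and $D\ge D_0$. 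This yields both bounds stated in the proposition.
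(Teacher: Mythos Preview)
The paper does not give its own proof of this proposition: it records that the bounds ``follow directly from Lemma~9.1 in~\cite{MSX}'' and are ``also proved in Corollary~3.1~\cite{HMS}'', and then uses them as a black box. Your sketch therefore goes beyond what the paper supplies, and it follows the sequential-conditioning strategy of~\cite{HMS} that the paper cites. The main-term computation is correct: the first-order Taylor coefficient $\partial_\rho\log\Pbb(Z_1,Z_2\text{ in tail})\big|_{\rho=0}=a^2/p^2$, the conditional tail mean $-a/p$ via the inverse Mills ratio, the hockey-stick assembly $\sum_{s<R}\binom{s}{2}=\binomthree{R}$, and the $p\leftrightarrow 1-p$ symmetry for the opposite tail are all right.

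Two points in your error analysis deserve care. First, the bound $\|\Delta\|_{\mathrm{op}}=O(\sqrt{r\log r/d})$ on $\mathcal E$ is the random-matrix bound for \emph{centred} entries; once you condition on all $X_{ij}$ lying in the tail of mass $p$, the off-diagonal entries of $\Delta$ acquire a common sign and mean $\approx -a/(p\sqrt d)$, and the resulting rank-one piece forces $\|\Delta\|_{\mathrm{op}}\asymp r/\sqrt d$ rather than $\sqrt{r/d}$. Fortunately this cruder estimate still yields a total second-order contribution of order $r^3/d$, and since $\sqrt d=Dt\ge Dr$ this is bounded by $Kr^3/(D\sqrt d)$ as required. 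Second, writing the remainder as ``$O(\|\Delta\|_{\mathrm{op}}^2)$'' elides that the second-order term in the log-orthant expansion is a double sum over off-diagonal entries of mixed second partials; controlling this uniformly in $r$ (and handling the interaction between the exponential inside the expectation and the indicator of the previous-step event) is exactly the substantive analytic work carried out in~\cite{MSX} and~\cite{HMS}, which is why the paper defers to those references rather than reproving the estimate.
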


We will deduce the following main theorem from Proposition~\ref{prop:prob_bounds}. 

\begin{theorem}
\label{thm:geom-sawin}
Fix $\delta>0$ and $p\in(0,1/2)$ and constants $D\ge D_0(p)$ and $K=K(p)$. For each $t\ge 2$ set $d=D^2t^2$.
Let $c_p>0$ be defined by $\Pbb(Z\le -c_p)=p$ for $Z\sim N(0,1)$.

Let $\varphi$ denote the standard normal density, and set $a=\varphi(c_p)$.
Define
\[
\kappa(p)=\frac{a^3}{6p^3},
\qquad
\lambda(p)=\frac{a^3}{6(1-p)^3}.
\]
Finally let $M=\lfloor e^{mt}\rfloor$ where
\[
m=-\frac12\log p+\frac{\kappa(p)}{D}-\frac{K}{D^2}-\delta\, .
\]

Then letting $H:=H(M,d,p)$ we have that $\Pbb(H\text{ is }K_t\text{-free})=1-o(1)$ and, writing $v_1,\dots,v_t$ for i.i.d.\ uniform
vertices in $[M]$ (with replacement),
\[
\Pbb\big(\{v_1,\dots,v_t\}\text{ is independent in }H\big)
\le \exp(\,-\alpha(p,D)t^2+o(t^2)) ,
\]
where
\[
\alpha(p,D)
= \min_{\theta\in[0,1]}
\left(
(1-\theta)\Big(-\tfrac12\log p+\tfrac{\kappa(p)}{D}\Big)
-\frac{\theta^2}{2}\log(1-p)-\frac{\lambda(p)}{D}\theta^3
-(2-\theta) \frac{K}{D^2}\right)-\delta .
\]
Consequently,
\[
c_{t,t}\ \le\ \exp(\,-\alpha(p,D)t^2+o(t^2)).
\]
\end{theorem}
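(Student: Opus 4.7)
The plan is to instantiate Sawin's probabilistic construction~\cite{Sawin} using the spherical random geometric graph $H=H(M,d,p)$ in place of his Erd\H{o}s--R\'enyi random graph. First, I would handle $K_t$-freeness via a first-moment computation: applying~\eqref{eq:Pblue-unif} with $r=t$ (so $\sqrt d=Dt$ and $\binom{t}{3}\sim t^3/6$), the probability that a fixed $t$-subset forms a clique is at most $(1-p)^{\binom{t}{2}}\exp(\lambda(p)t^2/D+Kt^2/D^2+o(t^2))$, and a union bound over the $\binom{M}{t}\le\exp(mt^2)$ many $t$-subsets gives
\[
\E[\#K_t]\;\le\;\exp\!\Big(\big(m+\tfrac{1}{2}\log(1-p)+\lambda(p)/D+K/D^2\big)t^2+o(t^2)\Big).
\]
The $-\delta$ slack in the definition of $m$, together with the hypothesis $D\ge D_0(p)$, makes this exponent strictly negative for all sufficiently large $t$, and Markov's inequality then yields $\Pbb(H\text{ is }K_t\text{-free})=1-o(1)$.

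Next, I would bound $\Pbb_{H,v}(\{v_1,\dots,v_t\}\text{ indep in }H)$ by decomposing over the number $r\in\{1,\dots,t\}$ of distinct values among the $v_i$'s. Writing $\theta=r/t$, a Stirling-number count gives $\Pbb(\text{image size}=r)\le e^{o(t^2)}M^{r-t}=\exp(-(1-\theta)mt^2+o(t^2))$; conditioning on this, the distinct vertices are uniform over $\binom{M}{r}$, and their independent-set probability is controlled by~\eqref{eq:Pred-unif}. Assembling these factors produces an explicit exponent $E(\theta)t^2+o(t^2)$ for the $\theta$-summand, and bounding the sum over $r$ by $(t+1)$ times its maximum term (absorbing $\log t$ into $o(t^2)$) gives $\Pbb_{H,v}(\{v_1,\dots,v_t\}\text{ indep})\le \exp(\max_{\theta\in[0,1]}E(\theta)\,t^2+o(t^2))$.

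To conclude, I would substitute the explicit form of $m$ into $E(\theta)$ and optimize: after expansion and simplification the maximum over $\theta$ equals $-\alpha(p,D)$ as stated, with the $\log(1-p)$, $\lambda(p)$ and $K/D^2$ contributions in $\alpha$ arising from the calibration of $m$ against the $K_t$-free threshold from the first step. Since $\Pbb(H\text{ is }K_t\text{-free})=1-o(1)$, averaging then gives $\E[\Pbb_v(\text{indep}\mid H)\mid H\text{ is }K_t\text{-free}]\le \Pbb_{H,v}(\text{indep})/\Pbb(H\text{ is }K_t\text{-free})\le \exp(-\alpha(p,D)t^2+o(t^2))$, so there exists a $K_t$-free realization $H^\ast$ with $\Pbb_v(\text{indep in }H^\ast)\le \exp(-\alpha(p,D)t^2+o(t^2))$, certifying $c_{t,t}\le \exp(-\alpha(p,D)t^2+o(t^2))$.

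The main obstacle is the algebraic bookkeeping in this last step: verifying that $\max_\theta E(\theta)$ reproduces $\alpha(p,D)$ in the exact stated form---in particular producing the $-\lambda(p)\theta^3/D$ and $-(2-\theta)K/D^2$ terms from the combination of the first two steps---and then carrying out the one-dimensional optimization over $\theta\in[0,1]$. Tracking the $o(t^2)$ remainders (from the Stirling-number count of partitions and from the correction terms in Proposition~\ref{prop:prob_bounds}) without spoiling the leading rate is routine but tedious.
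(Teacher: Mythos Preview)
Your overall strategy is exactly the paper's: a first-moment bound for $K_t$-freeness, a decomposition of $\Pbb_{H,v}(\{v_1,\dots,v_t\}\text{ indep})$ by the number $k$ of distinct $v_i$'s, application of Proposition~\ref{prop:prob_bounds} to each summand, and an optimisation over $\theta=k/t$; the conditioning inequality $c_{t,t}\le \E[q(H)]/\Pbb(\mathcal A)$ at the end is also identical.

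The gap is that you have applied the two estimates of Proposition~\ref{prop:prob_bounds} with their roles swapped, and with the given $m$ the argument then fails. Substituting $m=-\tfrac12\log p+\kappa(p)/D-K/D^2-\delta$ into your first-moment exponent gives
\[
m+\tfrac12\log(1-p)+\frac{\lambda(p)}{D}+\frac{K}{D^2}
\;=\;\tfrac12\log\frac{1-p}{p}+\frac{\kappa(p)+\lambda(p)}{D}-\delta,
\]
whose leading term $\tfrac12\log\tfrac{1-p}{p}$ is strictly positive for every $p\in(0,1/2)$; the $-\delta$ slack cannot kill this when $\delta$ is small, so $\E[\#K_t]\to\infty$ and $K_t$-freeness is not established. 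The bound the paper actually uses for the clique probability is the one with main term $p^{\binom{t}{2}}$ and \emph{negative} geometric correction $-\kappa(p)t^2/D$ (the labels ``clique'' and ``independent set'' in Proposition~\ref{prop:prob_bounds} are interchanged relative to how the formulas are invoked in the proof); this cancels $m$ exactly and leaves $-\delta t^2$. Correspondingly, the independent-set step must use the bound with main term $(1-p)^{\binom{k}{2}}$ and \emph{positive} correction $+\lambda(p)\theta^3 t^2/D$; with your choice of \eqref{eq:Pred-unif} you would instead obtain $\tfrac{\theta^2}{2}\log p-\kappa(p)\theta^3/D$ inside $E(\theta)$, which does not recover the stated $\alpha(p,D)$. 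Once the two bounds are exchanged, your outline goes through verbatim and matches the paper's proof.
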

Theorem~\ref{thm:geom-sawin} constructs a graph $H$ on $M=e^{mt}$ vertices which is $K_t$-free with high
probability and for which $t$ i.i.d.\ uniform vertices span an independent set with probability at most
$\exp(-\alpha t^2+o(t^2))$. This theorem will quickly imply Theorem~\ref{cor:main-numerics} by approximating $\alpha(p,D)$ for $D$ large. 

The specific value of $\gamma$ in Theorem \ref{cor:main-numerics} depends on the values of $K$ and $D_0$ in our Theorem~\ref{thm:geom-sawin}, which in turn directly depend on Proposition~\ref{prop:prob_bounds}. By optimizing these values one would be immediately be able to obtain explicit bounds in Theorem~\ref{cor:main-numerics}. For instance the Hunter, Milojevi\'c, Sudakov analysis from \cite{HMS} might be very useful for this purpose. Combining Theorem~\ref{cor:main-numerics} with Theorem~\ref{prop:sawin-reduction} immediately yields new slightly better multicolor Ramsey lower bounds.

\section{Bounding $c_{t,t}$ using the random geometric graph}
\label{sec:proof}

We now prove Theorem~\ref{thm:geom-sawin}. The argument follows from estimating the expected number of cliques and of independent sets in the geometric random graph, and noticing that they will be smaller than in the Erd\H{o}s-Rényi random graph. 

Recall the setup: we fixed $p \in (0,1)$ and the constants
\[
\kappa(p)\coloneqq \frac{a^3}{6p^3},
\qquad
\lambda(p)\coloneqq \frac{a^3}{6(1-p)^3}.
\]
Let $C_0=2$ and $D\ge D_0(p,C_0)$, where $D_0$ is the constant from Proposition \ref{prop:prob_bounds}. Set $d=D^2t^2$ and let $H\sim H(M,d,p)$. We want to show that there exists a choice of $M$ of the form $\log M = m t$ with
\begin{equation}
\label{eq:m-def}
m\ =\ -\frac12\log p\ +\ \frac{\kappa(p)}{D}\ +\ O_p\!\left(\frac{1}{D^2}\right)
\end{equation}
such that $\Pbb(H\text{ contains a }K_t)=o(1)$, as well as
\begin{equation}
\label{eq:ctt-bound}
\Pbb\big(\{v_1,\dots,v_t\}\text{ is independent in }H\big)
\le \max_{1\le k\le t}\left(
M^{-(t-k)}\,(1-p)^{\binomtwo{k}}\,
\exp\left(\frac{6\lambda(p)}{D}\cdot \frac{\binomthree{k}}{t} \;+\; K \frac{t^2}{D^2}\right)
\right),
\end{equation}
where $v_1,\dots,v_t$ are i.i.d.\ uniform vertices of $H$ (with replacement). Consequently, there exists $\alpha_{\mathrm{true}}(p,D)$ such that
\[
c_{t,t}\ \le\ \exp(\,-\;\alpha_{\mathrm{true}}(p,D)\,t^2\;+\;o(t^2)).
\]
Moreover, if we define the ``formal'' exponent
\begin{equation}
\label{eq:alpha-formal}
\alpha_{\mathrm{formal}}(p,D)\coloneqq \min_{\theta\in[0,1]}
\left(
(1-\theta)\Big(-\tfrac12\log p+\tfrac{\kappa(p)}{D}\Big)
-\frac{\theta^2}{2}\log(1-p)-\frac{\lambda(p)}{D}\theta^3
-(2-\theta)\frac{K}{D^2}\right),
\end{equation}
then
\begin{equation}
\label{eq:alpha-error}
\alpha_{\mathrm{true}}(p,D)\ =\ \alpha_{\mathrm{formal}}(p,D)\ +o(1).
\end{equation}

\begin{proof}
Let $\mathcal A$ be the event that $H$ is $K_t$-free. For a (deterministic) graph $H$ on vertex set $[M]$, define
\[
q(H)\ :=\ \Pbb(\{v_1,\dots,v_t\}\ \text{is an independent set in }H),
\]
where $v_1,\dots,v_t$ are i.i.d.\ uniform in $[M]$ (with replacement).
If $H$ is $K_t$-free then, by definition of $c_{t,t}$ as an infimum over $K_t$-free graphs,
we have $c_{t,t}\le q(H)$. Hence on $\mathcal A$ we have $c_{t,t}\le q(H)$, and therefore
\begin{equation}
\label{eq:conditioning-ineq}
c_{t,t}\ \le\ \E[q(H)\mid \mathcal A]
\ =\ \frac{\E[q(H)\mathbf 1_{\mathcal A}]}{\Pbb(\mathcal A)}
\ \le\ \frac{\E[q(H)]}{\Pbb(\mathcal A)}.
\end{equation}
Thus it suffices to choose $M$ so that $\Pbb(\mathcal A)=1-o(1)$ and then to bound $\E[q(H)]$.

\medskip
\textbf{Showing $H$ is $K_t$-free.} Let $X$ be the number of $K_t$'s in $H$. Then
\[
\Pbb(H\text{ contains a }K_t)=\Pbb(X\ge 1)\le \E[X].
\]
By linearity of expectation and symmetry,
\begin{equation}
\label{eq:EX-clique}
\E[X]\ =\ \binom{M}{t}\,\Pbb\big(H(t,d,p)\text{ is a clique}\big).
\end{equation}
Applying Proposition~\ref{prop:prob_bounds} with $r=t$ gives
\[
\Pbb\big(H(t,d,p)\text{ is a clique}\big)
\le p^{\binomtwo{t}}
\exp\!\left(
-\frac{a^3}{p^3\sqrt d}\binomthree{t}
+K\,\frac{t^3}{D\sqrt d}
\right).
\]
Since $\sqrt d=Dt$ and $\binomthree{t}=\frac{t^3}{6}+O(t^2)$, we have
\[
\frac{a^3}{p^3\sqrt d}\binomthree{t}
=\frac{a^3}{p^3}\cdot \frac{t^3/6+O(t^2)}{Dt}
=\frac{\kappa(p)}{D}\,t^2+o(t^2),
\]
and also $K\,\frac{t^3}{D\sqrt d}=K\,\frac{t^2}{D^2}$.
This yields
\begin{equation}
\label{eq:clique-prob-t}
\Pbb\big(H(t,d,p)\text{ is a clique}\big)
\le p^{\binomtwo{t}}\,
\exp\left(-\frac{\kappa(p)}{D}t^2\;+\;K\,\frac{t^2}{D^2}\;+\; o(t^2)\right).
\end{equation}

For any $\delta>0$ we may set
\begin{equation}
\label{eq:m-choice}
m\ :=\ -\frac12\log p+\frac{\kappa(p)}{D}-\frac{K}{D^2}-\delta,
\qquad
M:=\left\lfloor \exp(mt)\right\rfloor.
\end{equation}
Using that $M^t=\exp(mt^2+O(t))$ and $p^{\binomtwo{t}}=\exp(\binomtwo{t}\log p)=\exp((\frac12t^2+O(t))\log p)$, we combine
\eqref{eq:EX-clique} and \eqref{eq:clique-prob-t} to get
\[
\E[X]
\le 
\exp(mt^2)\cdot \exp\left(\frac12t^2\log p\right)\cdot
\exp\left(-\frac{\kappa(p)}{D}t^2+K\frac{t^2}{D^2}\right).
\]
Using the definition of $m$ in \eqref{eq:m-choice}, this becomes
\[
\E[X]\ \le \exp\left(-\delta t^2 +o(t^2)\right)\ =\ o(1).
\]
Hence $\Pbb(\mathcal A)=1-o(1)$.

\medskip
\textbf{Bounding $q(H)$.} Next, note that
\begin{align*}
   \E_H[q(H)]
&=\E_H\Pbb(\{v_1,\dots,v_t\}\text{ independent in }H)\\
&\leq t! \; M^{-t}\sum_{k=1}^t M^{k}\cdot \,\Pbb_H(H(k,d,p)\text{ is independent}). 
\end{align*}

Now apply Proposition \ref{prop:prob_bounds} with $r=k\le t\le 2t$:
\[
\Pbb(H(k,d,p)\text{ is independent})
\le (1-p)^{\binomtwo{k}}
\exp\!\left(
\frac{a^3}{(1-p)^3\sqrt d}\binomthree{k}
+K\,\frac{k^3}{D\sqrt d}
\right).
\]
Since $\sqrt d=Dt$, the leading correction becomes
\[
\exp\!\left(\frac{a^3}{(1-p)^3\sqrt d}\binomthree{k}\right)
=
\exp\left(\frac{a^3}{(1-p)^3}\cdot \frac{1}{Dt}\binomthree{k}\right)
=
\exp\left(\frac{6\lambda(p)}{D}\cdot\frac{\binomthree{k}}{t}\right).
\]
Moreover, $K\frac{k^3}{D\sqrt d}=K\frac{k^3}{D\cdot Dt}\le K\frac{t^2}{D^2}$ which gives the desired bound.

\medskip\noindent\textbf{Consequences for $c_{t,t}$ and the exponent $\alpha$.}
Since $\Pbb(\mathcal A)=1-o(1)$, combining \eqref{eq:conditioning-ineq} therefore implies that
\[
c_{t,t}\ \le\ (1+o(1))\,\E[q(H)]
\ \le\ (1+o(1))\,t!\max_{1\le k\le t}\left(
M^{-(t-k)}(1-p)^{\binomtwo{k}}
\exp\left(\frac{6\lambda(p)}{D}\cdot\frac{\binomthree{k}}{t}+K \frac{t^2}{D^2}\right)
\right).
\]
Using that $t!=\exp(o(t^2))$ we deduce~\eqref{eq:ctt-bound}, therefore
\[
c_{t,t}\ \le\ \exp(-\alpha_{\mathrm{true}}(p,D)t^2+o(t^2))
\]
for some $\alpha_{\mathrm{true}}(p,D)$.

Finally, to identify $\alpha_{\mathrm{true}}(p,D)$ set $k=\lfloor \theta t\rfloor$.
Then, uniformly for $\theta\in[0,1]$,
\[
-(t-k)\log M = -(1-\theta)mt^2+o(t^2),\qquad
\binomtwo{k}\log(1-p)=\frac{\theta^2}{2}\log(1-p)\,t^2+o(t^2),
\]
and
\[
\frac{6\lambda(p)}{D}\cdot\frac{\binomthree{k}}{t}
=\frac{\lambda(p)}{D}\theta^3 t^2+o(t^2).
\]
dividing by $t^2$. Also, by \eqref{eq:m-def} we may replace $m$ by $-\frac12\log p+\frac{\kappa(p)}{D}-\frac{K}{D^2}-\delta$ which is exactly \eqref{eq:alpha-error}.
\end{proof}

\section{Proof of Theorem~\ref{cor:main-numerics}}
In this section we prove Theorem~\ref{cor:main-numerics} by approximating $\alpha(p,D)$ for $D$ large.
\begin{proof}[Proof of Theorem~\ref{cor:main-numerics}]
We consider
\[
f(\theta)
=
(1-\theta)\Big(-\tfrac12\log p+\tfrac{\kappa(p)}{D}\Big)
-\frac{\theta^2}{2}\log(1-p)
-\frac{\lambda(p)}{D}\theta^3
-(2-\theta)\frac{K}{D^2},
\qquad \theta\in[0,1], 
\]
and notice that $$\alpha(p,D)=\min_{\theta\in [0,1]} f(\theta)\, .$$
Then
\[
f'(\theta)
=
-\Big(-\tfrac12\log p+\tfrac{\kappa(p)}{D}\Big)
-\theta\log(1-p)
-\frac{3\lambda(p)}{D}\theta^2
+\frac{K}{D^2}.
\]
Setting $f'(\theta)=0$ and multiplying by $-1$ yields the quadratic equation
\[
\frac{3\lambda(p)}{D}\theta^2+\log(1-p)\,\theta
+\Big(-\tfrac12\log p+\tfrac{\kappa(p)}{D}-\tfrac{K}{D^2}\Big)=0.
\]
Hence the stationary points are
\[
\theta
=
\frac{D}{6\lambda(p)}
\left(
-\log(1-p)\pm
\sqrt{\log(1-p)^2-\frac{12\lambda(p)}{D}\left(-\tfrac12\log p+\tfrac{\kappa(p)}{D}-\tfrac{K}{D^2}\right)}
\right).
\]
For large $D$, the ``$+$'' root is larger, therefore the minimum of $f$ is the $``-''$ root. Writing this root as $\theta_*$, we have
\[
\theta_*
=
\frac{D}{6\lambda(p)}
\left(
-\log(1-p)-
\sqrt{\log(1-p)^2-\frac{12\lambda(p)}{D}\left(-\tfrac12\log p+\tfrac{\kappa(p)}{D}-\tfrac{K}{D^2}\right)}
\right).
\]

\medskip
\noindent\textbf{Large-$D$ expansion.}
Let $\theta_*=\theta_0+\theta_1/D+O(1/D^2)$. At $D=\infty$ the quadratic reduces to
$\log(1-p)\,\theta_0-\tfrac12\log p=0$, hence
\[
\theta_0=\frac{\log p}{2\log(1-p)}.
\]
Expanding the quadratic to first order in $1/D$ gives
\[
\log(1-p)\,\theta_1+3\lambda(p)\theta_0^2+\kappa(p)=0,
\]
so
\[
\theta_1=-\frac{\kappa(p)+3\lambda(p)\theta_0^2}{\log(1-p)}
=
\frac{1}{\log(1-p)}\left(
-\kappa(p)-\frac{3(\log p)^2}{4\log(1-p)^2}\lambda(p)
\right).
\]
Therefore,
\[
\boxed{
\theta_*
=
\frac{\log p}{2\log(1-p)}
+
\frac{1}{D\,\log(1-p)}
\left(
-\kappa(p)-\frac{3(\log p)^2}{4\log(1-p)^2}\lambda(p)
\right)
+O_p\!\left(\frac{1}{D^2}\right).
}
\]
Note that the parameter $K$ only enters the quadratic as $-K/D^2$, and therefore does not contribute at
order $1/D$ (it first appears in the $1/D^2$ term).

Now substituting in $f(\theta_*)$ we get $$\alpha(p,D)=\frac{(\log p-4\log(1-p))\log p}{8 \log(1-p)}+\frac{1}{D}\left(\left(1-\frac{\log p}{2 \log(1-p)}\right)\kappa(p)-\frac{(\log p)^3}{8 (\log (1-p))^3}\lambda(p)\right)+O_p\!\left(\frac{1}{D^2}\right)\, .$$
Finally substituting in the values of $\kappa(p)$ and $\lambda(p)$ we obtain 
$$\alpha(p,D)-\frac{(\log p-4\log(1-p))\log p}{8 \log(1-p)}=\frac{a^3}{D}\left(\left(1-\frac{\log p}{2 \log(1-p)}\right)\frac{1}{p^3}-\frac{(\log p)^3}{8 (\log (1-p))^3}\frac{1}{(1-p)^3}\right)+O_p\!\left(\frac{1}{D^2}\right)$$

Finally $$D\gamma=\left(1-\frac{\log p}{2 \log(1-p)}\right)\frac{1}{p^3}-\frac{(\log p)^3}{8 (\log (1-p))^3}\frac{1}{(1-p)^3}+O_p\left(\frac{1}{D}\right)>0$$ for all $p\in (0.42,0.5)$ and $D$ large enough as we can see ploted in the graph below. 

\begin{center}
  \includegraphics[width=0.8\textwidth]{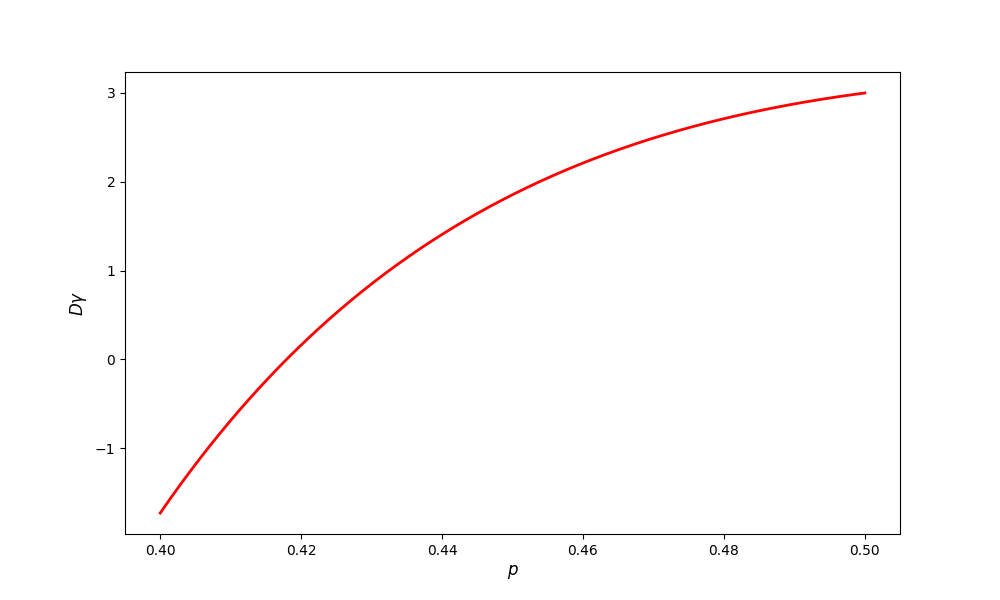}
\end{center}
\end{proof}


\begin{thebibliography}{99}
\bibitem{Abbott}
H.~L.~Abbott,
\emph{A note on Ramsey’s theorem},
Canadian Mathematical Bulletin \textbf{15} (1972), 9--10.

\bibitem{AlonRodl}
N.~Alon and V.~R\"odl,
\emph{Sharp bounds for some multicolor Ramsey numbers},
Combinatorica \textbf{25} (2005), 125--141.

\bibitem{balister2026upper}
P.~Balister, B.~Bollob\'as, M.~Campos, S.~Griffiths, E.~Hurley, R.~Morris, 
J.~Sahasrabudhe, and M.~Tiba,
\emph{Upper bounds for multicolour Ramsey numbers},
J.~Amer.~Math.~Soc., to appear.

\bibitem{campos2026exponential}
M.~Campos, S.~Griffiths, R.~Morris, and J.~Sahasrabudhe,
\emph{An exponential improvement for diagonal Ramsey},
Ann. of Math., to appear.

\bibitem{CF}
D.~Conlon and A.~Ferber,
\newblock \emph{Lower bounds for multicolour Ramsey numbers},
\newblock Adv. Math. \textbf{378} (2021), Paper No.~107528, 48 pp.

\bibitem{erdos1947remarks}
P.~Erd\H{o}s,
\emph{Some remarks on the theory of graphs},
Bull. Amer. Math. Soc. \textbf{53} (1947), 292--294.

\bibitem{erdos1935combinatorial}
P.~Erd\H{o}s and G.~Szekeres,
\emph{A combinatorial problem in geometry},
Compositio Math. \textbf{2} (1935), 463--470.

\bibitem{HMS}
Z.~Hunter, A.~Milojevi\'c, and B.~Sudakov,
\newblock \emph{Gaussian random graphs and Ramsey numbers},
\newblock arXiv:2512.17718 (2025).

\bibitem{lefmann1987note}
H.~Lefmann,
\emph{A note on Ramsey numbers},
Studia Sci. Math. Hungar. \textbf{22} (1987), 445--446.

\bibitem{MSX}
J.~Ma, W.~Shen, and S.~Xie,
\newblock \emph{An exponential improvement for Ramsey lower bounds},
\newblock arXiv:2507.12926 (2025).

\bibitem{Sawin}
W.~Sawin,
\newblock \emph{An improved lower bound for multicolor Ramsey numbers and a problem of Erd\H{o}s}, Journal of Combinatorial Theory, Series A 188, 105579. 

\bibitem{spencer1975ramsey}
J.~Spencer,
\emph{Ramsey’s theorem—A new lower bound},
J. Combin. Theory Ser. A \textbf{18} (1975), 108--115.

\bibitem{Wig}
Y.~Wigderson,
\newblock \emph{An improved lower bound on multicolor Ramsey numbers},
\newblock Proc. Amer. Math. Soc. \textbf{149} (2021), no.~6, 2371--2374.

\end{thebibliography}
\end{document}